\newtheorem{theorem}{Theorem}[section]
\newtheorem{definition}[theorem]{Definition}
\newtheorem{proposition}[theorem]{Proposition}
\def\pagenumber{1}
\begin{document}
\setcounter{page}{\pagenumber}
\newcommand{\T}{\mathbb{T}}
\newcommand{\R}{\mathbb{R}}
\newcommand{\Q}{\mathbb{Q}}
\newcommand{\N}{\mathbb{N}}
\newcommand{\Z}{\mathbb{Z}}
\newcommand{\tx}[1]{\quad\mbox{#1}\quad}
%Please put here any further newcommands.
\parindent=0pt
\def\SRA{\hskip 2pt\hbox{$\joinrel\mathrel\circ\joinrel\to$}}
\def\tbox{\hskip 1pt\frame{\vbox{\vbox{\hbox{\boldmath$\scriptstyle\times$}}}}\hskip 2pt}
\def\circvert{\vbox{\hbox to 8.9pt{$\mid$\hskip -3.6pt $\circ$}}}
\def\IM{\hbox{\rm im}\hskip 2pt}
\def\ES{\vbox{\hbox to 8.9pt{$\big/$\hskip -7.6pt $\bigcirc$\hfil}}}
\def\TR{\hbox{\rm tr}\hskip 2pt}
\def\TRS{\hbox{\rsmall tr}\hskip 2pt}
\def\GRAD{\hbox{\rm grad}\hskip 2pt}
\def\GRADS{\hbox{\rsmall grad}\hskip 2pt}
\def\bull{\vrule height .9ex width .8ex depth -.1ex}
\def\VLLA{\hbox to 25pt{\leftarrowfill}}
\def\VLRA{\hbox to 25pt{\rightarrowfill}}
\def\DU{\mathop{\bigcup}\limits^{.}}
\setbox2=\hbox to 25pt{\rightarrowfill}
\def\DRA{\vcenter{\copy2\nointerlineskip\copy2}}
\def\RANK{\hbox{\rm rank}\hskip 2pt}
%.........................................................................%
\font\rsmall=cmr7 at 7truept \font\bsmall=cmbx7 at 7truept
%..................MACROS FOR MODERN TEXT ITALIC...........................%
\newfam\Slfam
\font\tenSl=cmti10 \font\neinSl=cmti9 \font\eightSl=cmti8
\font\sevenSl=cmti7 \textfont\Slfam=\tenSl
\scriptfont\Slfam=\eightSl \scriptscriptfont\Slfam=\sevenSl
\def\Sl{\fam\Slfam\tenSl}

\vskip -1cm
%\noindent {\tt Preprint 07-03-2006 \pagenumber - xxx\\
%\copyright preprint 2010}
\title[Exotic Heat PDE's]{\mbox{}\\[1cm] EXOTIC HEAT PDE's}
\author{Agostino Pr\'astaro}
%\thanks{\hspace{-.5cm}\tt
%   Received xxxx, 2002
%    1056-2176 \$15.00 \copyright preprint 2006}
\maketitle
\vspace{-.5cm}
{\footnotesize
\begin{center}
Department of Methods and Mathematical Models for Applied
Sciences, University of Rome ''La Sapienza'', Via A.Scarpa 16,
00161 Rome, Italy. \\
E-mail: {\tt Prastaro@dmmm.uniroma1.it}
\end{center}
}
\vspace{1cm}
\centerline{\it This paper is dedicated to Stephen Smale in occasion of his 80th birthday}

{\footnotesize
\vspace{1cm} {\bf ABSTRACT.} Exotic heat equations that allow to prove the Poincar\'e conjecture, some related problems and suitable generalizations too are considered. The methodology used is the PDE's algebraic topology, introduced by A. Pr\'astaro in the geometry of PDE's, in order to characterize global solutions.\footnote{This paper has been published in \cite{PRA16}.\\Work partially supported by Italian grants MURST ''PDE's Geometry and Applications''.}

\vskip 0.5cm

{\bf AMS (MOS) MS CLASSIFICATION. 55N22, 58J32, 57R20; 20H15.}
{\rm KEY WORDS AND PHRASES. Integral bordisms in PDE's;
Existence of local and global solutions in PDE's; Conservation laws;
Crystallographic groups; Singular PDE's; singular MHD-PDE's.}
}

\section{\bf Introduction}

{\footnotesize\hfill\rightline{\it ''In a category\hskip 5pt $\mathcal{C}$ of manifolds}

\hfill\rightline{\it every homotopy sphere is homeomorphic to a sphere.''}

\begin{figure}[h]
\scalebox{0.9}{$\centerline{\includegraphics[height=2.5cm]{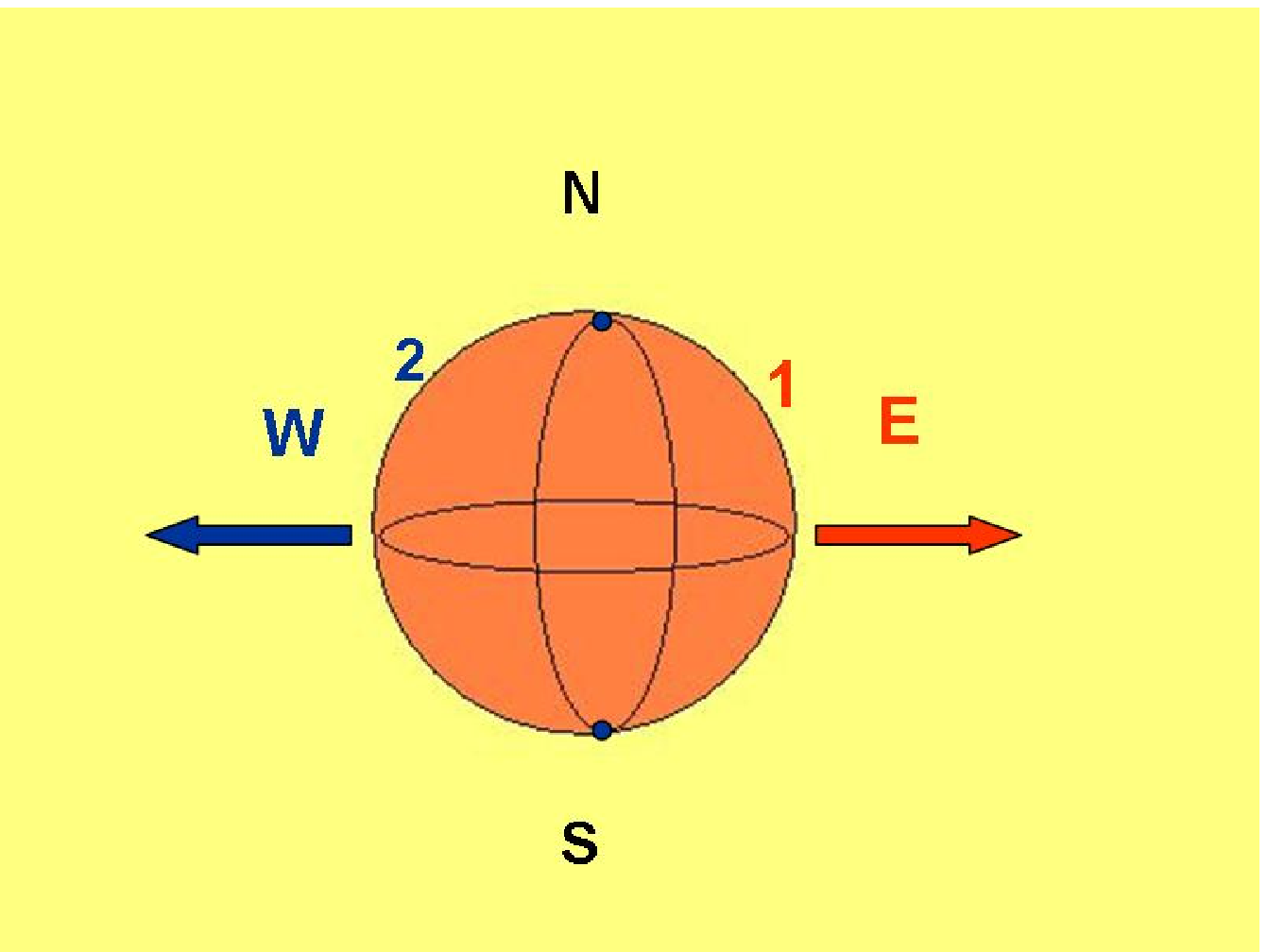}
\includegraphics[height=2.5cm]{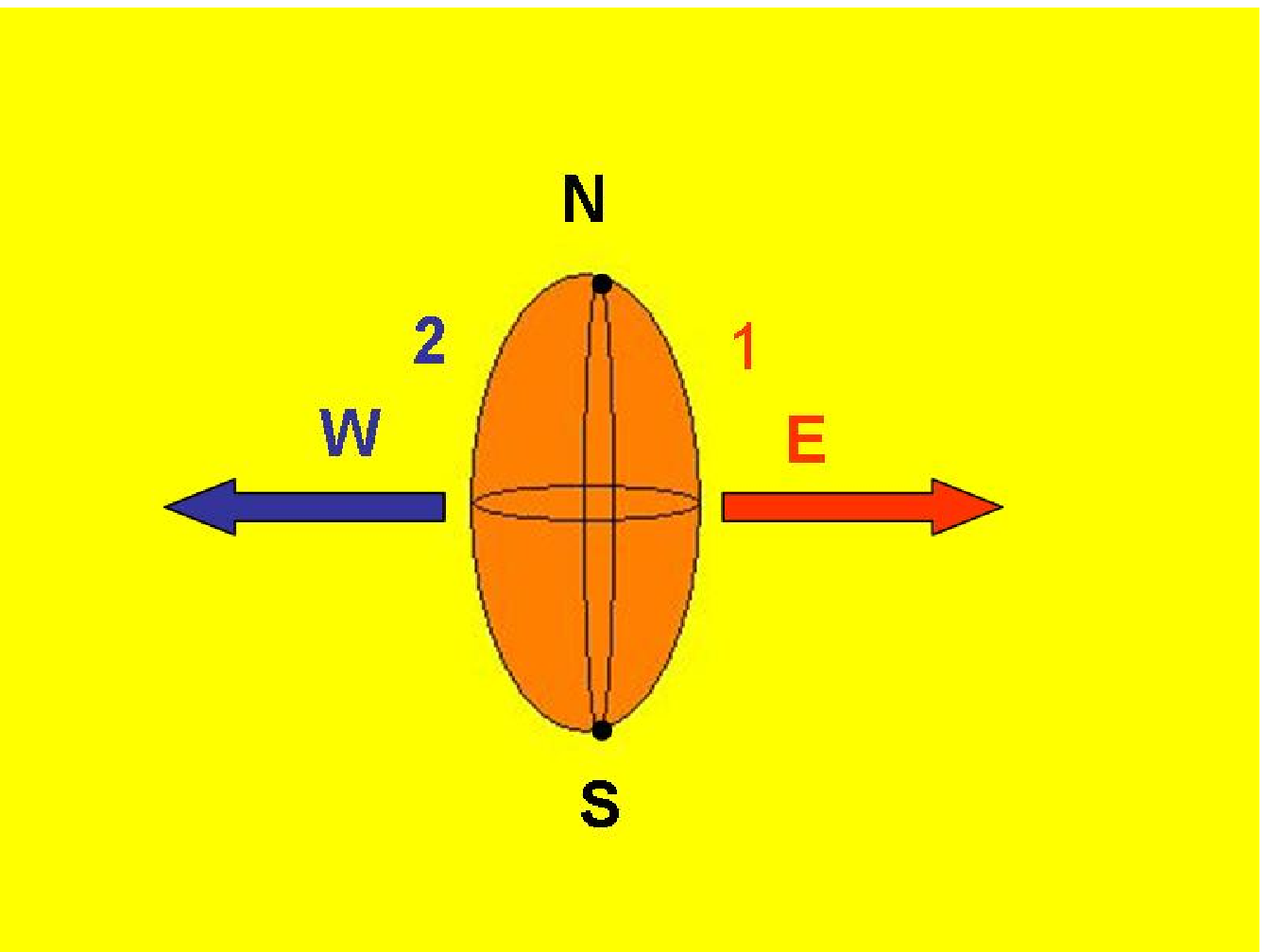} \includegraphics[height=2.5cm]{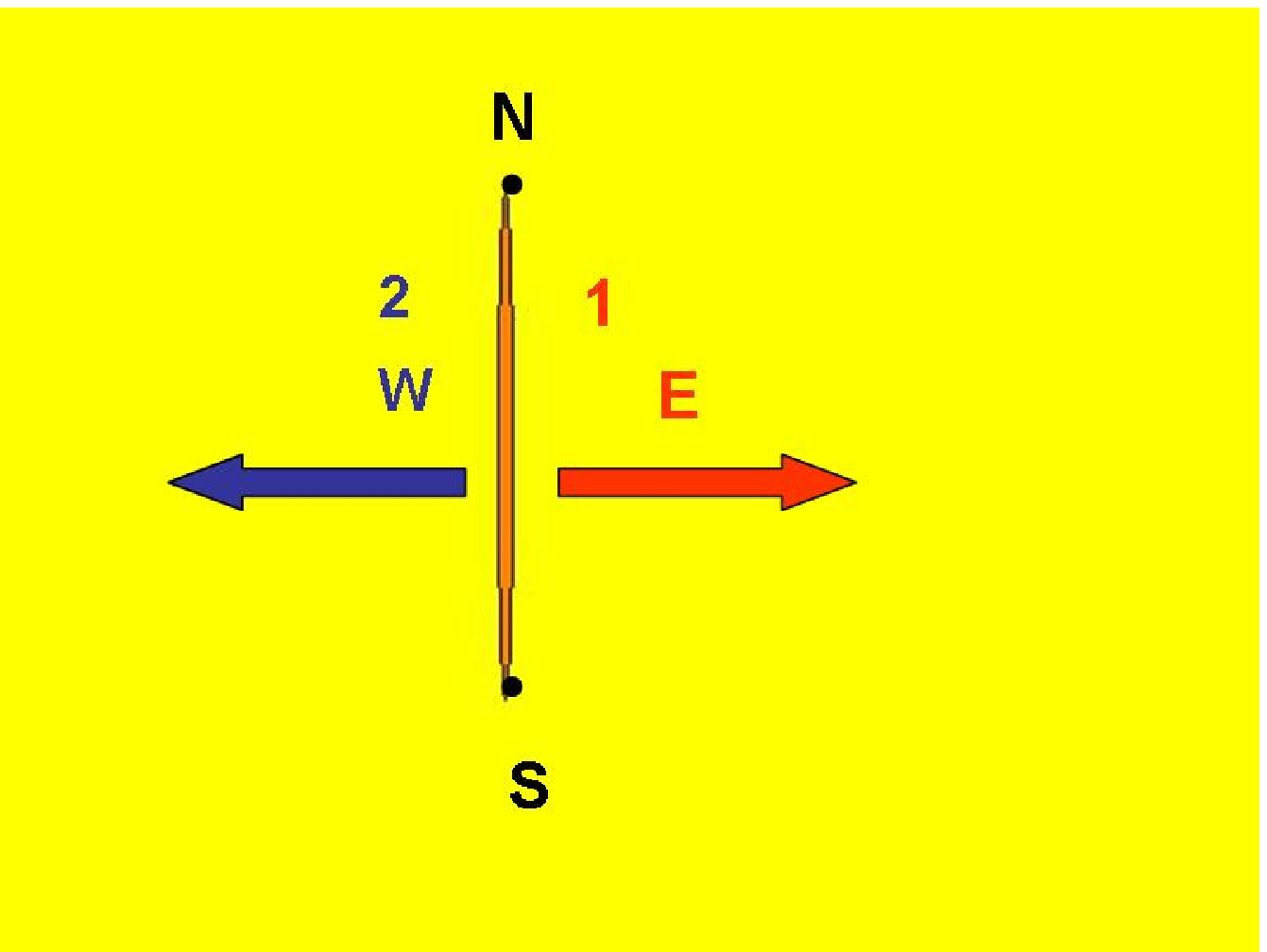}
\includegraphics[height=2.5cm]{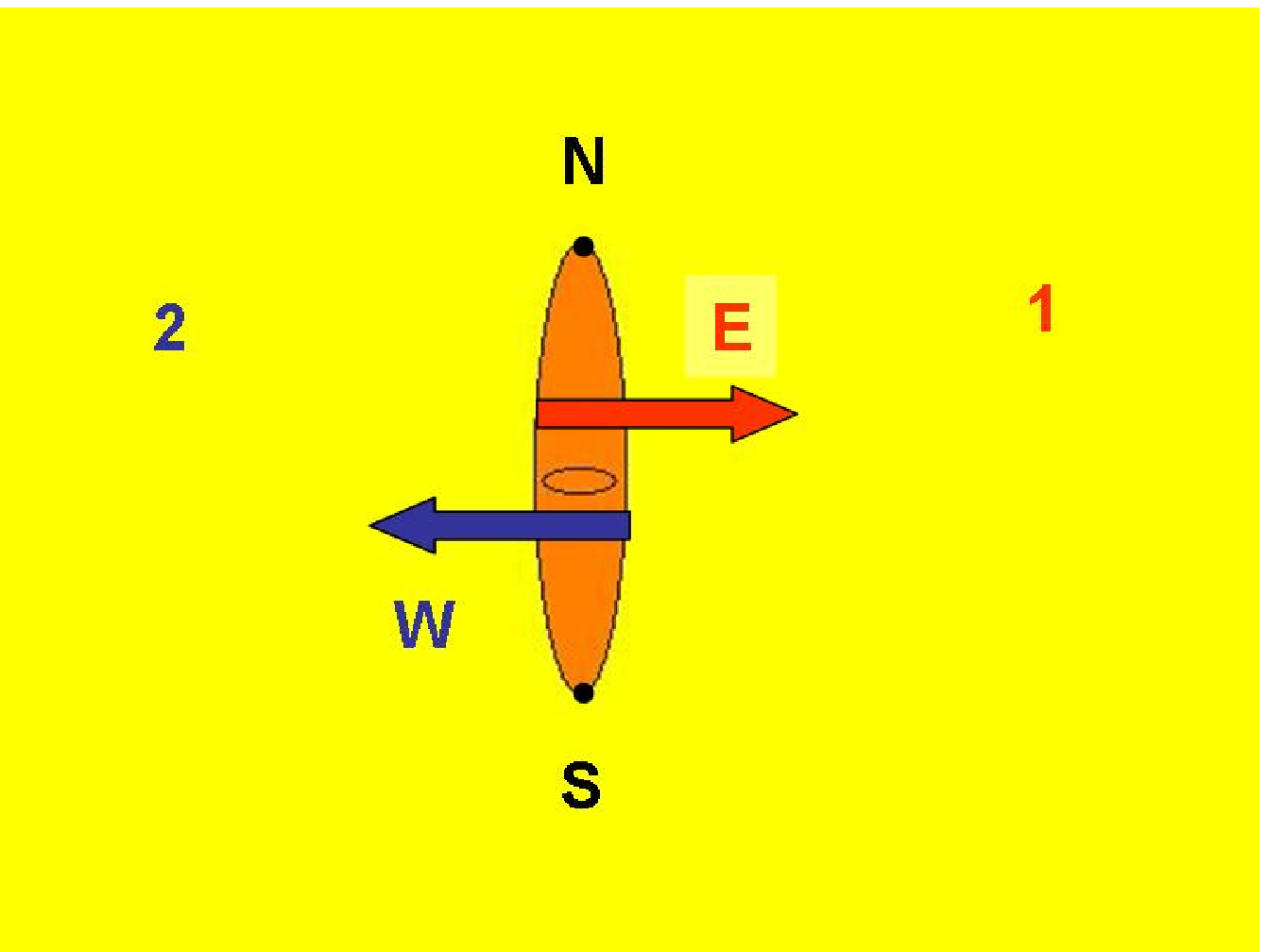}}$}
\caption{Smale's paradox: turning a sphere $S^2\subset\mathbb{R}^3$ inside out.}
\label{smale-paradox}
\end{figure}
}
\vskip 1cm

The famous Poincar\'e's conjecture is about $n$-dimensional manifolds, with $n=3$, (R. S. Hamilton \cite{HAMIL1, HAMIL2, HAMIL3, HAMIL4, HAMIL5}, G. Perelman \cite{PER1, PER2}, A. Pr\'astaro \cite{PRA14, AG-PRA1}), but there are also generalizations of this conjecture for higher dimension manifolds. For $n=4$ the Poincar\'e conjecture has been proved by M. Freedman \cite{FREEDMAN, FREEDMAN-QUINN}) and for $n\ge 5$, by S. Smale \cite{SMALE1, SMALE2}. More recently has been given a generalization for quantum supermanifolds by A. Pr\'astaro, that has also proved it in \cite{PRA12}). Nowadays, one can state that a generalized Poincar\'e conjecture can be proved, or disproved, depending on the particular category $\mathcal{C}$ in which it is formulated. This problem aroses in the framework of the geometric topology, but in order to be solved it was necessary to go outside that framework and recast the problem in a theory of PDE's. But the more recent results by A. Pr\'astaro \cite{PRA12, PRA14, AG-PRA1}), have proved that it was necessary to return inside the algebraic topology framework, applied to the PDE's geometric theory. Really, it was soon evident that remaining in a pure algebraic topologic approach it was not enough to solve this conjecture. In fact, a fundamental idea to solve this problem is to ask whether it is possible find a smooth manifold, $V$, that without singular points bords a $3$-dimensional compact, closed, smooth, simply connected manifold $N$ with $S^3$, when $N$ is  homotopy equivalent to $S^3$. The bordism theory is able to state that a smooth manifold $V$ such that $\partial V=N\bigcup S^3$, there exists, since the nonoriented and oriented $3$-dimensional bordism groups $\Omega_3$ and ${}^+\Omega_3$ respectively, are both trivial: $\Omega_3={}^+\Omega_3=0$. However, by simply looking to the above bordism groups it is impossible to state if $V$ has singular points (i.e., has holes) or it is a cylinder. By the way, more informations can be obtained by the {\em h-cobordism theory}. More precisely the {\em h-cobordism theorem} in a category $\mathcal{C}$ of manifolds, states that if the compact manifold $V$ has $\partial V=N_0\sqcup N_1$, such that the inclusion maps $N_i\hookrightarrow V$, $i=0,1$, are homotopy equivalences, (i.e., $V$ is a {\em h-cobordism}), and $\pi_1(N_i)=0$, then $V\cong_{\mathcal{C}} N_0\times[0,1]$. This theorem holds for $n\ge 5$ in the category of smooth manifolds (S. Smale) and for $n=4$ in the category of topological manifolds (M. Freedman). But it does not work for $n=3$!

A very important angular stone, in the long history about the solution of the Poincar\'e conjecture, has been the introduction, by R. S. Hamilton, of a new approach recasting the problem in to solving a PDE, the {\em Ricci flow equation}, and asking for nonsingular solutions there, that starting from a Riemannian manifold $(N,\gamma)$ arrive to the $3$-dimensional sphere $S^3$, respectively identified with initial and final Cauchy manifolds in the Ricci flow equation. In that occasion the Mathematical Analysis, or more precisely the Functional Analysis, entered in the Poincar\'e conjecture problem. This approach has had many improvements until the papers by G. Perelman.
More recently, A. Pr\'astaro, by using his algebraic topologic theory of PDE's was able to give a pure geometric proof of the Poincar\'e conjecture. Let us emphasize that the usual geometric methods for PDE's (Spencer, Cartan), were able to formulate for nonlinear PDE's, local existence theorems only, until the introduction, by A. Pr\'astaro, of the algebraic topologic methods in the PDE's geometric theory. These give suitable tools to calculate integral bordism groups in PDE's, and to characterize global solutions. Then, on the ground of integral bordism groups, a new geometric theory of stability for PDE's and solutions of PDE's has been built. These general methodologies allowed to A. Pr\'astaro to solve fundamental mathematical problems too, other than the Poincar\'e conjecture and some of its generalizations, like characterization of global smooth solutions for the Navier-Stokes equation and global smooth solutions with mass-gap for the quantum Yang-Mills superequation. (See \cite{PRA3, PRA4, PRA5, PRA6, PRA7, PRA8, PRA9, PRA10, PRA11, PRA12, PRA13, PRA15}.\footnote{See also Refs. \cite{AG-PRA1, AG-PRA2, PRA-RAS}, where interesting related applications of the PDE's Algebraic Topology are given.})

The main purpose of this paper is to emphasize some problems related to {\em exotic heat PDE's} recently focused.\footnote{The Ricci flow equation can be considered a generalization of the classical Fourier's heat equation $u_t-\kappa u_{xx}=0$. In this paper we call {\em exotic heat equations}, PDE's that, like the Ricci flow equation, are of the type $F^j\equiv u^j_t-f^j(u^i_k)=0$, $1\le i,j\le m$, with the length $|k|$ of the multi-index $k\in\{1,\cdots,n\}$, given by  $0\le |k|\le s\le r$, where $F^j:J^r(W)\to \mathbb{R}$ are analytic functions of order $r\ge 0$ on a fiber bundle $\pi:W\equiv \mathbb{R}\times E(M)\to \mathbb{R}\times M$, where $E(M)$ is a vector bundle over $M$, with $M$ an analytic manifold of dimension $n$. Let us emphasize that the structure of exotic heat equation is the more suitable to use in order to prove (generalized) Poincar\'e conjectures. In fact, the idea to use PDE's to solve the Poincar\'e's conjecture, was the initial motivation to introduce and study the well-known {\em Yamabe equation} \cite{YAMABE}. But that road did not turn out a lucky choice to prove the conjecture, even if the Yambe equation is a very important equation to study conformal problems in Riemannian geometry. (See, e.g., \cite{AG-PRA1}.)}

Let us conclude this Introduction with a theorem, direct issue from results contained in \cite{PRA14}.
\begin{theorem}\label{homeomorphic-three-manifold-diffeomorphic-three-sphere}
Any $3$-dimensional compact, closed simply connected smooth manifold, $M$, homotopy equivalent to $S^3$, is diffeomorphic to $S^3$.\footnote{This last result agrees with the Hauptvermuntung conjecture that was proved for $(n=2,3)$-dimensional manifolds and disproved for $(n\ge 4)$-dimensional manifolds. (See \cite{CASSON, MILNOR, MOISE1, MOISE2, RADO, SULLIVAN, TUSCH}.)}
\end{theorem}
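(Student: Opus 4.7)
The plan is to translate the topological statement into a question about the existence of smooth global solutions of a suitable exotic heat PDE. First, I would set up the (possibly normalized) Ricci flow as an exotic heat equation $(RF)\subset J^2(W)$ on the fiber bundle $\pi:W\equiv\R\times S^2_+T^*M\to\R\times M$, where $S^2_+T^*M$ denotes the open subbundle of positive-definite symmetric $2$-tensors. A section of $\pi$ solving $(RF)$ is a one-parameter family $t\mapsto\gamma(t)$ of Riemannian metrics on $M$. The topological problem becomes: produce a smooth global solution of $(RF)$ whose initial Cauchy datum is $(M,\gamma_0)$ for some metric $\gamma_0$ and whose asymptotic/final Cauchy datum is the round sphere $(S^3,\gamma_{\rm std})$.

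Next, I would pass to Pr\'astaro's algebraic-topological framework for $(RF)$, following \cite{PRA14}. The key object is the integral bordism group of $(RF)$, whose classes classify admissible pairs of Cauchy data joinable by an integral bordism (i.e., a solution, possibly with singularities). Because $M$ is simply connected and $\Omega_3={}^+\Omega_3=0$, the classical bordism obstructions vanish, and the characteristic-class computations feeding into the integral bordism group of $(RF)$ force $[M]$ and $[S^3]$ to coincide in this group. This produces at least a weak (possibly singular) integral bordism between $(M,\gamma_0)$ and $(S^3,\gamma_{\rm std})$.

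The decisive step, and the main obstacle, is upgrading this weak integral bordism to a smooth classical solution. Here one invokes the stability theory for PDE's developed in the cited works: one must show that, for the specific Cauchy datum $(M,\gamma_0)$ carried by a homotopy $3$-sphere, the obstructions to smoothness in the integral bordism class vanish. This reduces to verifying that the relevant crystallographic/holonomy data and the higher Cartan characters of $(RF)$ are compatible with a nonsingular evolution, exploiting $\pi_1(M)=0$ and $H_*(M)\cong H_*(S^3)$ to kill the potential singular strata. This is where the genuine input from the algebraic topology of PDE's replaces Perelman-type surgery arguments.

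Once such a smooth global integral bordism is in hand, the Ricci flow provides a one-parameter family of diffeomorphisms $\varphi_t:M\to N_t$ with $N_0=M$ and $N_\infty=S^3$, so that the composition $\varphi_\infty:M\to S^3$ is the desired diffeomorphism. I would close by remarking that this proof strategy is intrinsically three-dimensional: it does not require an h-cobordism theorem (which fails for $n=3$), because the diffeomorphism is produced directly by the flow, and it is consistent with the Hauptvermutung in dimension three noted in the footnote.
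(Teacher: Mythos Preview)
Your proposal tracks the paper's strategy closely through the setup: encoding the problem via the Ricci flow equation $(RF)\subset J{\it D}^2(W)$ and invoking Pr\'astaro's integral bordism machinery from \cite{PRA14, AG-PRA1} to pass from a weak (possibly singular) integral bordism between $(M,\gamma_0)$ and $(S^3,\gamma_{\rm std})$ to a smooth solution $V$ with $\partial V=M\sqcup S^3$. Up to that point you and the paper agree.

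The divergence, and the gap, is in your final step. You assert that once a smooth integral bordism $V$ is in hand, ``the Ricci flow provides a one-parameter family of diffeomorphisms $\varphi_t:M\to N_t$ with $N_0=M$ and $N_\infty=S^3$,'' and you stress that this ``does not require an h-cobordism theorem.'' But a smooth $4$-manifold $V$ with $\partial V=M\sqcup S^3$ does \emph{not} by itself furnish diffeomorphisms between its time-slices: for that one needs $V$ to be a product. The paper's proof makes precisely this point the crux: citing \cite{PRA14}, it asserts that the smooth solution $V$ of $(RF)$ satisfies $V\cong M\times[0,1]$, and from this cylinder structure concludes $M\cong S^3$. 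The paper even summarizes the argument as the algebraic topology of PDE's allowing one ``to complete h-cobordism theorem in the category of $3$ dimensional smooth manifolds''---the opposite of bypassing it.

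So your outline is missing exactly the ingredient the paper isolates as decisive: an argument that the smooth solution $V$ has the product structure $V\cong M\times[0,1]$. Without that, mere existence of a smooth bordism from $M$ to $S^3$ is already guaranteed by $\Omega_3={}^+\Omega_3=0$ and says nothing about the diffeomorphism type of $M$; the whole force of the PDE method lies in controlling the \emph{shape} of $V$, not just its existence.
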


\begin{proof}
In fact, the method followed by A. Pr\'astaro to prove the Poincar\'e conjecture \cite{PRA14, AG-PRA1}, allows us to conclude that in dimension $n=3$, under the hypotheses of the theorem, one can state that the manifold $V$, smooth solution of the Ricci flow equation, $(\partial t.g_{ij})-\kappa R_{ij}(g)=0$, such that $\partial V=M\sqcup S^3$, has the following structure $V\cong M\times[0,1]$, hence must necessarily be $M\cong S^3$, (in the category of smooth manifolds). In other words the algebraic topology of PDE's, introduced by A. Pr\'astaro, allows us to complete h-cobordism theorem in the category of $3$ dimensional smooth manifolds.
\end{proof}

\section{\bf RICCI FLOW EQUATION IN NON-COMPACT CASE}

In this section we consider the problem of existence and uniqueness of smooth complete solutions of the Ricci flow equation, in the noncompact case. In particular we answer to a recent question put for the $n$-dimensional Euclidean case in \cite{CHEN}. Let us consider the Ricci flow equation, written in the form

\begin{equation}\label{ricci-flow-equation}
\fbox{$(\partial t.g_{ij})(x,t)=-2\, R_{ij}(x,t),\quad (x,t)\in M\times\mathbb{R}$}
\end{equation}

when $M$ is a $n$-dimensional smooth, noncompact and complete Riemannian manifold, $(M,\gamma)$.  In particular the assumption that $M$ is not compact and complete, is a substantial difference with respect to the usual way in which such equation has been considered. In fact, the Ricci flow equation has been principally introduced by R. S. Hamilton, to prove the Poincar\'e conjecture on compact closed $3$-dimensional manifolds homotopic equivalent to $S^3$. (See \cite{{CHOW-CHU-GLIE-GUE-ISE-IVEY-KNOP-LU-LUO-NI-1}, {CHOW-CHU-GLIE-GUE-ISE-IVEY-KNOP-LU-LUO-NI-2}, HAMIL1, HAMIL2, HAMIL3, HAMIL4, HAMIL5, PER1, PER2} to follows the proof of the Poincar\'e conjecture in the approach  by Hamilton-Perelmann. Furthermore, in \cite{PRA14, AG-PRA1} the Poincar\'e conjecture has been proved by using a method different by the Hamilton-Perelmann's one.)

The main motivations to write this section are some recent works where the Ricci flow equation is implemented on a noncompact complete Riemannian manifold, and solutions are request to identify complete metrics on the sectional noncompact manifolds. (See, e.g., \cite{CHEN, SHI}.) More specifically, it was very intriguing the recent paper by B.-L. Chen \cite{CHEN}, where it is emphasized the uniqueness of such solutions in the dimensional $3$, and the following question: ''{\em Does the strong uniqueness of the Ricci flow hold on the Euclidean space $\mathbb{R}^n$, for $n\ge 4$\hskip 2pt ?}\hskip 3pt''.

In fact we have the following theorem.
\begin{theorem}\label{complete-noncompact-case-ricci-flow-equation}
Let us consider the Ricci flow equation {\em(\ref{ricci-flow-equation})}
on the Euclidean space $(\mathbb{R}^n,\gamma)$. Then, the unique solution of {\em(\ref{ricci-flow-equation})} for complete metrics is just $\gamma$.
\end{theorem}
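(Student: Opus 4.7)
The plan is to split the statement into an easy existence part and a uniqueness part, and handle the uniqueness using Prástaro's PDE algebraic topology machinery invoked in Theorem \ref{homeomorphic-three-manifold-diffeomorphic-three-sphere}.

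For existence, I would simply note that the flat Euclidean metric satisfies $R_{ij}(\gamma)=0$, so the stationary section $g_{ij}(x,t)=\gamma_{ij}(x)$ is a classical global solution of (\ref{ricci-flow-equation}), and it is complete by assumption on $(\mathbb{R}^n,\gamma)$.

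For uniqueness, I would recast (\ref{ricci-flow-equation}) as a second-order analytic PDE $\mathcal{E}_2\subset J^2(W)$ on the fiber bundle $W=\mathbb{R}\times S^2T^*\mathbb{R}^n\to \mathbb{R}\times \mathbb{R}^n$, whose sections are time-dependent Riemannian metrics, and then study the Cauchy problem with datum $\Sigma_0=\{t=0\}\cong(\mathbb{R}^n,\gamma)$. Following the method of \cite{PRA14, AG-PRA1}, I would look at the integral bordism group of $\mathcal{E}_2$ based at $\Sigma_0$: its classes parametrize the admissible global smooth solutions sharing the given Cauchy datum. The key step is to show that, when the global section is further constrained to give a complete metric $g(t)$ for each $t$, the corresponding bordism class collapses to the single element represented by the stationary $\gamma$. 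This is the non-compact analog of the h-cobordism-type argument used in Theorem \ref{homeomorphic-three-manifold-diffeomorphic-three-sphere}: completeness plays the role of the closedness/compactness hypothesis used there, providing the global rigidity that the local Cartan--Spencer formal theory cannot by itself supply.

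The main obstacle is that completeness is a soft, topological constraint not directly visible in the symbol of $\mathcal{E}_2$, whereas Chen's question is precisely about whether, in the absence of curvature bounds, curvature can be created ``from infinity.'' To close this gap I would couple the integral-bordism analysis with the evolution equations $\partial_t R=\Delta R+2|\mathrm{Ric}|^2$ and $\partial_t|\mathrm{Rm}|^2\le \Delta|\mathrm{Rm}|^2+C|\mathrm{Rm}|^3$: since $R(\cdot,0)\equiv 0$ and $|\mathrm{Rm}|(\cdot,0)\equiv 0$, one uses the completeness of $g(t)$ to justify an integration by parts at infinity, so that no curvature can be imported from the spatial boundary. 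This forces $|\mathrm{Rm}|(g(t))\equiv 0$ for all $t$, hence each $g(t)$ is a complete flat metric on $\mathbb{R}^n$; the standard rigidity for complete flat metrics on $\mathbb{R}^n$ then yields $g(t)\equiv \gamma$, which exactly says that the Prástaro bordism class of $\Sigma_0$ in the completeness-constrained sector of $\mathcal{E}_2$ contains only $\gamma$. The harder of the two ingredients to justify rigorously is the ``no curvature from infinity'' step in every dimension $n\ge 4$, since it is precisely the point at which Chen's dimension-three argument stops working.
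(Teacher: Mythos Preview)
Your proposal has a genuine gap, and it is one you essentially identify yourself: the step ``use the completeness of $g(t)$ to justify an integration by parts at infinity, so that no curvature can be imported from the spatial boundary'' is precisely Chen's open question in dimension $n\ge 4$. Completeness alone gives no control on the growth of $|\mathrm{Rm}|$ at spatial infinity, so neither the scalar evolution $\partial_t R=\Delta R+2|\mathrm{Ric}|^2$ nor the inequality $\partial_t|\mathrm{Rm}|^2\le\Delta|\mathrm{Rm}|^2+C|\mathrm{Rm}|^3$ can be integrated against cutoffs or fed to a noncompact maximum principle without an a priori bound you do not possess. The integral-bordism layer you add does no independent work: you never compute or use a bordism group of $\mathcal{E}_2$, and the argument as written collapses to the classical analytic one whose missing ingredient you flag in your last sentence. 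As it stands, the plan bottoms out in an open problem rather than a proof.

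The paper's route is entirely different and far more elementary, but it rests on a strong structural ansatz you should be aware of. Instead of treating an arbitrary complete solution $g(t)$, the paper \emph{postulates} at the outset that the solution is a pullback $g(x,t)=\phi_t^*\gamma$ by a one-parameter family of diffeomorphisms $\phi_t:\mathbb{R}^n\to\mathbb{R}^n$, with the informal justification that ``the restriction to consider noncompact complete metrics does not permit flows with contractions \dots\ isometric diffeomorphisms are instead permitted.'' Under this ansatz the Ricci tensor transforms naturally, $R(\phi_t^*\gamma)=\phi_t^*R(\gamma)=0$, and the Ricci flow becomes a first-order system ${}_\bullet(RF)$ for $\phi_t$ alone. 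In Cartesian coordinates with $\phi_0=\mathrm{id}$ one reads off that $\partial_t\bigl(\sum_a(\partial_{x_i}\phi_t^a)^2\bigr)=0$ for each $i$, so the Jacobian of $\phi_t$ is time-independent, hence equal to the identity, forcing $\phi_t(x)=x+c(t)$; these are rigid flows and give $g(t)\equiv\gamma$. No curvature evolution, no maximum principle, and no bordism computation enters. If your aim is to match the paper, accept the pullback ansatz and carry out this direct computation on $\phi_t$; if your aim is an unconditional proof for all complete solutions with $g(0)=\gamma$ in dimension $n\ge4$, neither your outline nor the paper supplies one.
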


\begin{proof}
Let us also underline that the restriction to consider noncompact complete metrics does not permit flows with contractions, and neither simple homeomorphic ones. Isometric diffeomorphisms are instead permitted. Therefore, let us add to the equation (\ref{ricci-flow-equation}) the condition that $g(x,t)$ is realized by means of a one parameter set of diffeomorphisms $\phi_t:M\to M$, by means of pull-back, i.e, put $g(x,t)=\phi_t^*\gamma$. One has for the Ricci tensor $R_{ij}(x)$ of $\gamma(x)$, the following induced deformation $R(x,t)=R(g(x,t))=\phi_t^*R(\gamma(x))$. In fact, for the natural covariance of the Riemannian metric and its Ricci tensor, one has the following commutative diagram

\begin{equation}
\scalebox{0.7}{$\xymatrix@C=80pt@R=80pt{M\ar@/^3pc/[rrr]_{R(g(x,t))}\ar@/^2pc/[rr]_{D^2g(x,t)}\ar@/^1pc/[r]_{g(x,t)}\ar[d]_{\phi_\lambda}&S^0_2M\ar[l]&J{\it D}^2(S^0_2M)\ar[l]\ar[r]_{R}&S^0_2M\\
M\ar@/_3pc/[rrr]^{R(\gamma(x))}\ar@/_2pc/[rr]^{D^2\gamma(x)}\ar@/_1pc/[r]^{\gamma}&S^0_2M\ar[u]^{S^0_2(\phi_\lambda)}\ar[l]&J{\it D}^2(S^0_2M)\ar[u]^{J{\it D}^2(\phi_\lambda)}\ar[l]\ar[r]^{R}&S^0_2M\ar[u]_{S^0_2(\phi_\lambda)}}$}
\end{equation}

Taking into account that in coordinates $(x^i)_{1\le i\le n}$ on $M$, one has
\begin{equation}\label{ricci-flow-equation-a}
\left\{\begin{array}{l}
         g_{ij}(x,t)=(\partial x_i.\phi^a_t)(\partial x_j.\phi^b_t)\gamma_{ab}(\phi_t(x))\equiv
         (\phi_t)^a_i(x)(\phi_t)^b_j(x)\gamma_{ab}(\phi_t(x))\\
          (\partial t.g_{ij})(x,t)=(\dot\phi_t)^a_i(x)(\phi_t)^b_j(x)\gamma_{ab}(\phi_t(x))
          +(\phi_t)^a_i(x)(\dot\phi_t)^b_j(x)\gamma_{ab}(\phi_t(x))\\
          \hskip 2cm+(\phi_t)^a_i(x)(\phi_t)^b_j(x)(\dot\phi_t)^\alpha(x)(\partial x_\alpha.\gamma_{ab})(\phi_t(x))\\
          R_{ij}(x,t)=(\phi_t)^a_i(x)(\phi_t)^b_j(x)R_{ab}(\phi_t(x))\\
       \end{array}\right.
\end{equation}
we get that equation (\ref{ricci-flow-equation}) can be written as follows

\begin{equation}\label{ricci-flow-equation-b}
{}_\bullet(RF)\subset J{\it D}(W):\hskip 2pt\left\{\begin{array}{l}
          (\dot\phi_t)^a_i(x)(\phi_t)^b_j(x)\gamma_{ab}(\phi_t(x))
         +(\phi_t)^a_i(x)(\dot\phi_t)^b_j(x)\gamma_{ab}(\phi_t(x))\\
          \hskip 2cm+(\phi_t)^a_i(x)(\phi_t)^b_j(x)(\dot\phi_t)^\alpha(x)(\partial x_\alpha.\gamma_{ab})(\phi_t(x))\\
          =-2(\phi_t)^a_i(x)(\phi_t)^b_j(x)R_{ab}(\phi_t(x))\\
       \end{array}\right.
\end{equation}

where $\pi:W\equiv \mathbb{R}\times M^2\to\mathbb{R}\times M$, $(t,x^i,y^j)\mapsto(t,x^i)$.

Let us first introduce an equivalence relation in the set $\mathcal{S}^{\infty}_{ol}({}_\bullet(RF))$ of smooth solutions of ${}_\bullet(RF)$.
\begin{definition}\label{rigid-flow}
Let $(M,\gamma)$ be a Riemannian manifold of dimension $n$. We say that a diffeomorphism $f:M\to M$, is {\em rigid} if $f^*\gamma=\gamma$, and $\det(j(f))=1$, where $j(f)$ is the Jacobian matrix of $f$. In other words $(j(f)^k_i)\in SO(n)$. We say that a flow,  $\phi_t(x)$, solution of ${}_\bullet(RF)$, is {\em rigid}, if $\phi_t^*$, is a rigid diffeomorphism, $\forall t$. Then we say, also, that $\phi_t(x)$ is a {\em rigid solution} of ${}_\bullet(RF)$.
\end{definition}

\begin{definition}\label{equivalence-relation-up-rigid-flow}
We say that two flows $\phi_t(x)$ and $\psi_t(x)$, smooth solutions of ${}_\bullet(RF)$, are {\em rigid equivalent}, if there exists a rigid solution $f_t$ of ${}_\bullet(RF)$ such that $\phi_t=\psi_t\circ f_t$.
\end{definition}

\begin{proposition}\label{equivalence-relation-up-rigid-flow-prposition}
The rigid equivalence is an equivalence relation in $\mathcal{S}^{\infty}_{ol}({}_\bullet(RF))$, that we denote with $\thicksim_R$. Then for any representative $\psi_t\in[\phi_t]_R\in \mathcal{S}^{\infty}_{ol}({}_\bullet(RF))/\thicksim_R\equiv [\mathcal{S}^{\infty}_{ol}({}_\bullet(RF))]_R$, one has
$(\psi_t)^*\gamma=(\phi_t)^*\gamma$, and $\psi_t^*\eta=\phi_t^*\eta$, where $\eta$ is the canonical form associated to the metric $\gamma$.\footnote{The deformation in Fig.\ref{smale-paradox} cannot be rigid since it changes the orientation of the sphere $S^2$. In fact, this deformation can be realized by contracting the Est hemisphere, (resp. West hemisphere), endowed of a normal unitary vector $E$ (resp. $W$), on the middle of its equatorial-line, on the oriented disk $(D^2,E)$, (resp. $(D^2,W)$), with $\partial D^2$ passing for the poles $N$ and $S$. Then, continuing to deform the two oriented disks in the same opposite directions. During this deformation the boundary $\partial D^2$ remains fixed, and both hemispheres conserve their orientations, but the resulting sphere has reversed orientation.}
\end{proposition}

\begin{proof}
In fact, if $\phi_t(x)\in {}_\bullet(RF)$ it follows that  $\phi_t(x)\thicksim_R \phi_t(x)$, since $\phi_t=\phi_t\circ (id_M)_t$. If $\phi_t, \psi_t\in {}_\bullet(RF)$ and $\phi_t\thicksim_R\psi_t$ it follow that there exists a rigid solution $f_t\in {}_\bullet(RF)$ such that $\phi_t=\psi_t\circ f_t$. Since we can also write $\phi_t\circ f^{-1}=\psi_t$, and of course also $f^{-1}\in {}_\bullet(RF)$, it follows that $\psi_t\thicksim_R\phi_t$. Finally, if one has $\phi_t, \psi_t,\varphi_t\in {}_\bullet(RF)$ with $\phi_t\thicksim_R\psi_t\thicksim_R\varphi_t$, it follows that $\phi_t=\psi_t\circ (f_t)_1$, $\psi_t=\varphi_t\circ (f_t)_2$, for some rigid solutions $(f_t)_i$, $i=1,2$. Then we get also
$\phi_t=\varphi_t\circ (f_t)_2\circ (f_t)_1=\varphi_t\circ f_t$, where $f_t=(f_t)_2\circ (f_t)_1$ is also a rigid solution of ${}_\bullet(RF)$. Therefore, $\phi_t\thicksim_R\varphi_t$.
\end{proof}

Let us find solutions of equations(\ref{ricci-flow-equation-b}) of the type

$\phi_t^a=e^{\omega t}h^a(x)$. Then one has that $\omega$ and $h^a(x)$ must satisfy the following equations:

\begin{equation}\label{ricci-flow-equation-c}
          h^a_i(x)h^b_j(x)\left\{\omega\left[2\gamma_{ab}(\phi_t(x))+
          e^{\omega t}h^\alpha(x)(\partial x_\alpha.\gamma_{ab})(\phi_t(x))\right]+2R_{ab}(\phi_t(x))\right\}=0.
\end{equation}

In the case that $(M,\gamma)=(\mathbb{R}^n,\gamma_E)$, then $R_{ab}=0$ and above equations (\ref{ricci-flow-equation-c}) reduce to the following ones

\begin{equation}\label{ricci-flow-equation-d}
          h^a_i(x)h^b_j(x)\omega\left[2\gamma_{ab}(\phi_t(x))+
          e^{\omega t}h^\alpha(x)(\partial x_\alpha.\gamma_{ab})(\phi_t(x))\right]=0.
\end{equation}
Let us add the condition $\phi_0=id_M$, then we get that necessarily $h^a(x)=x^a$, for $a=1,\dots, n$.  Therefore, equations (\ref{ricci-flow-equation-d}) reduce to the following ones:

\begin{equation}\label{ricci-flow-equation-e}
          \omega\left[2\gamma_{ij}(\phi_t(x))+
          e^{\omega t}x^\alpha(\partial x_\alpha.\gamma_{ij})(\phi_t(x))\right]=0.
\end{equation}

Of course we can take a cartesian coordinate system, so that $\gamma_{ij}=\delta_{ij}$. Therefore, equations (\ref{ricci-flow-equation-e}) reduce to the following equations $2\omega\delta_{ij}=0$. Thus we get the unique trivial solution for $\omega$, i.e., $\omega=0$ and the unique solution for $\phi_t(x)$, i.e., $\phi_t^a=x^a$, that gives the unique solution $g(x,t)=\gamma_E(x)$.

Above result does not depend on the particular assumption made on the type of solutions. In fact, if consider equation (\ref{ricci-flow-equation-b}) in the case $M=\mathbb{R}^n$, with $\gamma=\delta_{ij}$, we get the following equations

\begin{equation}\label{ricci-flow-equation-f}
\left\{\begin{array}{l}
A^{rs}_{abij}(\partial t\partial x_r.\phi_t^a)(\partial x_s.\phi_t^b)=0\\
A^{rs}_{abij}\equiv\delta_{ab}(\delta^{rs}_{ij}+\delta^{rs}_{ji}).\\
\end{array}
\right.\end{equation}

Then, one can easily see that the unique smooth solution of (\ref{ricci-flow-equation-f}) that satisfies the initial condition $\phi^a_0=x^a$ is just $\phi_t^a=x^a$, for all $t\in\mathbb{R}$. (Of course this is up to rigid flows that do not produce any deformation.) In fact, let us write above equation for $1 \le i=j\le n$. We get the following equations:

\begin{equation}\label{ricci-flow-equation-g}
\left\{\begin{array}{l}
2\delta_{ab}f^a_{t1}f^b_1=0\\
\cdots\\
2\delta_{ab}f^a_{tn}f^b_n=0\\
\end{array}
\right.\end{equation}

Here, for simplicity, we have put $f\equiv\phi_t$. Then we can also write:
\begin{equation}\label{ricci-flow-equation-h}
\left\{\begin{array}{l}
\partial t.\left((\partial x_1.f^1)^2+\cdots+(\partial x_1.f^n)^2\right)=0\\
\cdots\cr
\partial t.\left((\partial x_n.f^1)^2+\cdots+(\partial x_n.f^n)^2\right)=0\\
\end{array}
\right.\end{equation}

Therefore, we get the following first integrals:
\begin{equation}\label{ricci-flow-equation-h}
\left\{\begin{array}{l}
(\partial x_1.f^1)^2+\cdots+(\partial x_1.f^n)^2=c_1(x^1,\cdots,x^n)\\
\cdots\cr
(\partial x_n.f^1)^2+\cdots+(\partial x_n.f^n)^2=c_n(x^1,\cdots,x^n)\\
\end{array}
\right.\end{equation}

This means that the jacobian matrix $(\partial x_i.(\phi_t)^j)=(j^j_i(x^1,\cdots,x^n))$ is made by functions that depend only on the coordinates $x^i$.
\begin{equation}\label{ricci-flow-equation-i}
(\partial x_i.(\phi_t)^j)=j^j_i(x^1,\cdots,x^n),\quad i,j=1,\cdots, n.
\end{equation}

Let us integrate (\ref{ricci-flow-equation-i}):

\begin{equation}\label{ricci-flow-equation-l}
(\phi_t)^j=\int j^j_i(x^1,\cdots,x^n)\, dx^i+c^j(t,x^1,\cdots,\hat x^i,\cdots,x^n),
\end{equation}
where $c^j(t,x^1,\cdots,\hat x^i,\cdots,x^n)$ is any function that does not depend on $x^i$.
Taking into account the initial condition $\phi^j_0=x^j$, we get
\begin{equation}\label{ricci-flow-equation-m}
x^j=\int j^j_i(x^1,\cdots,x^n)\, dx^i+c^j(0,x^1,\cdots,\hat x^i,\cdots,x^n).
\end{equation}
Let us again derive (\ref{ricci-flow-equation-m}) with respect to $x^i$, we get $\delta^j_i=j^j_i$. So we have
$(\phi_t)^j=x^j+c^j(t,x^1,\cdots,\hat x^i,\cdots,x^n)$. Since this holds for any $1\le i\le n$, we conclude that the arbitrary functions $c^j$ can depend only on $t$. So we have the following functions:
\begin{equation}\label{ricci-flow-equation-n}
(\phi_t)^j=x^j+c^j(t).
\end{equation}
The flows given in (\ref{ricci-flow-equation-n}) are rigid flows, therefore do not produce any deformation. Since, of course, we aim find solutions up to rigid ones, we get that in the case of the $n$-dimensional Euclidean manifold $(\mathbb{R}^n,\gamma)$, the unique complete noncompact solution of Ricci flow equation is just $g(t,x)=\gamma$.
\end{proof}

\section{\bf PINCHING PROBLEMS IN RICCI FLOW EQUATION}

 In the pinching problems connected to the Ricci flow equation one aims to characterize solutions of this equation on a connected compact Riemannian manifold $(M,\gamma)$ with respect to the so-called {\em pinching constant} $\lambda=\hbox{\rm min sec/max sec}$, i.e., by adding an upper as well as a lower bound on the sectional curvature (denoted {\rm sec}). With respect to this scalar-characterization of Riemannian manifolds, there are some interesting informations on their global structure. Let us recall some ones.\footnote{These results are usually referred as {\em sphere theorems}, or {\em quarter-pinched sphere theorems}. In fact, they are generalizations of the {\em sphere theorem for $3$-manifolds} that states that if $M$ is an orientable $3$-manifold with $\pi_2(M)\not=0$, then there exists a non-zero element of $\pi_2(M)$ having a representative that is an {\em embedding} $S^2\to M$.} (S. Brendle and R. Schoens \cite{BRENDLE-SCHOENS}) If $M$ is a compact $n$-dimensional Riemannian manifold with $\lambda\in({{1}\over{4}},1)$ ({\em strict pinching}), $M$ is diffeomorphic to a spherical space form. If $\lambda\in[{{1}\over{4}},1]$ $M$ is diffeomorphic to a spherical space form or isometric to a locally symmetric space.
 (M. Berger \cite{BERGER} and W. Klingenberg \cite{KLING}) If $M$ is a compact simply connected manifold with $\lambda\ge{{1}\over{4}}$ then $M$ is either homeomorphic to $S^n$ or isometric to $\mathbb{C}P^n$, $\mathbb{H}P^n$ or $Ca\mathbb{P}^2$, with their standard Fubini metric. [(J. L. Synge \cite{SYNGE}) A manifold with positive curvature does not necessitate be simply connected. In fact if $\dim M=n=2k$, one has $\pi_1(M)=0$ if orientable and $\pi_1(M)=\mathbb{Z}_2$ if non-orentable. If $\dim M=n=2k+1$ and positively curved $M$ is orientable.] (J. Cheeger \cite{CHEEGER}) Given a constant $\epsilon>0$, there are only finitely many diffeomorphism types of compact simply connected $2n$-dimensional manifolds $M$ with $\lambda\ge\epsilon$. (F. Fang and X. Rong \cite{FANG-RONG}, A. Petrunin and W. Tuschmann \cite{PETRUNIN-TUSCHMANN, TUSCH}) Given a constant $\epsilon>0$, there are only finitely many diffeomorphism types of compact $(2n+1)$-dimensional manifolds  $M$ with $\pi_1(M)=\pi_2(M)=0$ and $\lambda\ge\epsilon$.

Let us consider four-dimensional manifolds with {\em $\lambda$-pinched flag curvature}, ($0\le \lambda<1$), i.e., $R_u(v,v)\ge\lambda(x)R_u(w,w)$, where $R_u(.,.)=R(u,.,u,.)$ is the symmetric bilinear form, identified by the curvature tensor $R$ and any nonzero vector $u\in T_xM$. Here $v,w\in T_x^\perp<\mathbb{R}u>$, $|v|=|w|$. In \cite{ANDREWS-NGUYEN} B. Andrwes and H. Nguyen proved that the class of positively curved compact connected $4$-dimensional manifolds, with $\lambda$-pinched flag curvature, $\lambda\ge {{1}\over{4}}$, is invariant under Ricci-flow. Moreover, any such manifold is either diffeomorphic to a spherical space form or isometric to $\mathbb{C}P^2$ with Fubini-study metric (up to scaling).
In that interesting paper, authors refer to a uniqueness existence theorem for local solutions of the Cauchy problem for the Ricci-flow equation. (A first proof has been given by R. S. Hamilton \cite{HAMIL1, HAMIL2}. Different proofs were obtained also by D. T. De Turk \cite{DETURK} and B. Chow and D. Knopp \cite{CHOW-KNOPP}.) However, it is important to underline that such uniqueness is strictly related to the class of regular solutions considered there. In fact, by recasting the Cauchy problem in the geometric theory of PDE's, one can see that the Ricci-flow equation identifies an analytic submanifold $(RF)\subset J{\it D}^2(W)$ of the second jet-derivative space for sections of the following fiber bundle $\pi:W\equiv \mathbb{R}\times \widetilde{S^0_2(M)}\to \mathbb{R}\times M$, where $\widetilde{S^0_2(M)}\subset S^0_2(M)$ is the open subbundle of non-degenerate symmetric tensors of type $(0,2)$ on $M$. (See \cite{PRA14} to understand in which sense must be interpreted the statement about uniqueness of smooth solutions for the Ricci flow equation.) However, in order to describe singularities in the flow, it is useful to consider the embeddings $(RF)\subset J{\it D}^2(W)\subset J^2_{n+1}(W)$, where $J^2_{n+1}(W)$ is the $2$-jet-space for $(n+1)$-dimensional submanifolds of $W$, $\dim M=n$. Since the Ricci-flow equation is formally integrable and completely integrable, with non-trivial symbol $g_2$, there are also singular solutions satisfying smooth Cauchy problems.\footnote{It is well known that every compact smooth manifold, (or compactifiable $C^\infty$ manifold), can be considered analytic too. This is exentially the meaning of the famous results by J. Nash \cite{NASH} and some relative improvements by A. Tognoli \cite{TOGNOLI} and T. Kawakami \cite{KAWA}.} Then the characterization of global solutions is obtained by means of the singular integral bordism groups of $(RF)$. (For details on the geometry of PDE's see \cite{PRA1, PRA2, PRA3, PRA4, PRA5, PRA6, PRA7, PRA8, PRA9, PRA10, PRA11, PRA14}.) For example, in the particular case of $4$-dimensional closed, compact, simply connected, smooth Riemannian manifolds $M$, one obtains $\Omega_4^{(RF)}/K^{(RF)}_4\cong\mathbb{Z}_2\oplus\mathbb{Z}_2\cong \Omega_4$, where $K^{(RF)}_4$ is the kernel of the canonical projection $p:\Omega_4^{(RF)}\to\Omega_4\cong \mathbb{Z}_2\bigoplus\mathbb{Z}_2$. With this respect, and taking into account the well-known theorems by A. Dold \cite{DOLD} and C. T. C. Wall \cite{WALL} on the (oriented) cobordism ring (${}^{+}\Omega_\bullet$) $\Omega_\bullet$, we can get from the results of this paper the following interesting theorem.

\begin{theorem}\label{}
The $4$-dimensional Riemannian manifolds preserving $\lambda$-pinched flag curvature, $\lambda\ge {{1}\over{4}}$, in a Ricci-flow, belong to the cobordism classes in the image $p^{-1}(r({}^{+}\Omega_4\cong\mathbb{Z}))\subset \Omega_4^{(RF)}$, where $r:{}^{+}\Omega_\bullet\to\Omega_\bullet$ is the forgetting orientation natural mapping, i.e. one has the exact sequences {\em(\ref{exact-commutative-diagram-oriented-non-oriented-bordisms-integral-bordism})}.
\begin{equation}\label{exact-commutative-diagram-oriented-non-oriented-bordisms-integral-bordism}
 \xymatrix{&&&\fbox{${}^+\Omega_4\cong\mathbb{Z}$}\ar[d]^{2}&\\
 &&&\fbox{${}^+\Omega_4\cong\mathbb{Z}$}\ar[d]^{r}&\\
 0\ar[r]&K_4^{(RF)}\ar[r]&\Omega_4^{(RF)}\ar[r]_(0.3){p}&\fbox{$\Omega_4\cong\mathbb{Z}_2\bigoplus\mathbb{Z}_2$}\ar[r]&0}.
\end{equation}
\end{theorem}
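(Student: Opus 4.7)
The plan is to reduce the cobordism-theoretic statement to the geometric classification of Andrews--Nguyen invoked just before the theorem, and then to trace cobordism classes through the two short exact sequences that form the commutative diagram.

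First, I would invoke the Andrews--Nguyen theorem: any compact connected positively curved $4$-dimensional Riemannian manifold $M$ whose $\lambda$-pinched flag curvature condition, $\lambda\geq 1/4$, is preserved along the Ricci flow is diffeomorphic to a spherical space form or isometric (up to scaling) to $(\mathbb{C}P^2, g_{FS})$. The relevant consequence for cobordism is that each such $M$ is representable by an orientable manifold in the natural (simply connected) regime; both $S^4$ and $\mathbb{C}P^2$ are orientable, and non-simply-connected spherical quotients are handled by orientation covers.

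Second, I would apply the Dold--Wall computation of low-dimensional oriented and unoriented cobordism: ${}^{+}\Omega_4\cong \mathbb{Z}$ is generated by $[\mathbb{C}P^2]$, while $[S^4]=0$ since $S^4=\partial D^5$; and $\Omega_4\cong \mathbb{Z}_2\oplus\mathbb{Z}_2$ is generated by $[\mathbb{C}P^2]$ and $[\mathbb{R}P^4]$. Under the forgetting-orientation map $r:{}^{+}\Omega_4\to \Omega_4$, the generator of ${}^{+}\Omega_4$ maps to the unoriented class $[\mathbb{C}P^2]$, which is nontrivial because the Stiefel--Whitney number $(w_2)^2\neq 0$. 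Hence the unoriented class $[M]\in\Omega_4$ of every manifold permitted by Andrews--Nguyen lies in $r({}^{+}\Omega_4)$.

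Third, I would use the canonical projection $p:\Omega_4^{(RF)}\to\Omega_4$ coming from the bottom exact row of the displayed diagram, sending an integral bordism class of an $(RF)$-solution to the unoriented class of its boundary Cauchy manifold. Since $p([M])\in r({}^{+}\Omega_4)$, we obtain $[M]\in p^{-1}(r({}^{+}\Omega_4))\subset \Omega_4^{(RF)}$, which is the claim. Exactness of the full diagram then follows by splicing the standard Bockstein-type sequence ${}^{+}\Omega_4\xrightarrow{\,2\,}{}^{+}\Omega_4\xrightarrow{\,r\,}\Omega_4$ in degree $4$ with the short exact sequence $0\to K_4^{(RF)}\to \Omega_4^{(RF)}\xrightarrow{p}\Omega_4\to 0$ provided by the PDE's algebraic topology framework.

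I expect the main obstacle to be controlling the non-simply-connected spherical space forms, since a class such as $[\mathbb{R}P^4]$ is \emph{not} in $r({}^{+}\Omega_4)$. The cleanest resolution is to observe that the Andrews--Nguyen hypotheses, together with the positivity of the pinched curvature, force orientability of the representatives that actually survive the Ricci flow in $(RF)$; alternatively, the factor of $2$ on the top row of the diagram encodes precisely the indeterminacy by which $[\mathbb{R}P^4]$ is washed out after lifting to the oriented theory, so the image $p^{-1}(r({}^{+}\Omega_4))$ is the correct target in either reading.
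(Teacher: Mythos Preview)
Your proposal follows essentially the same route as the paper. In fact, the paper gives no separate proof for this theorem: the argument is contained entirely in the paragraph preceding the statement, which (i) invokes the Andrews--Nguyen classification, (ii) records the short exact sequence $0\to K_4^{(RF)}\to\Omega_4^{(RF)}\xrightarrow{p}\Omega_4\cong\mathbb{Z}_2\oplus\mathbb{Z}_2\to 0$ coming from the PDE integral bordism machinery, and (iii) appeals to Dold and Wall for the structure of ${}^{+}\Omega_4$ and $\Omega_4$. Your three steps reproduce exactly this outline, with more detail on the generators and the map $r$.

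One point worth noting: your final paragraph about $\mathbb{R}P^4$ is a genuine concern that the paper does not address head-on. The paper sidesteps it by explicitly restricting, in the sentence just before the theorem, to ``$4$-dimensional closed, compact, \emph{simply connected}, smooth Riemannian manifolds $M$'' when computing $\Omega_4^{(RF)}$. Under that standing hypothesis the only spherical space form is $S^4$ itself, so every manifold arising from Andrews--Nguyen is orientable and the inclusion $[M]\in r({}^{+}\Omega_4)$ is immediate. Your alternative ``resolution'' via the factor of $2$ in the top row is not quite right---that map records multiplication by $2$ in ${}^{+}\Omega_4$, not a mechanism for absorbing $[\mathbb{R}P^4]$---so the clean fix is simply to adopt the paper's simply-connected hypothesis.
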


\section{\bf NECKPINCHING PROBLEMS IN SINGULAR EXOTIC HEAT PDE}\label{neckpinching-problems-in-singular-exotic-heat-pde}

In a recent paper Z. Gang and I. M. Sigal \cite{GANG-SIGAL} considered some particular solutions for the Ricci flow equation encoding the mean curvature flow of an initial hypersurface $M_0\subset \mathbb{R}^{d+1}$, that is of revolution around the axis $x=x_{d+1}$. There $u(x,t)$ represents, at fixed $t$, the ''distance function'' of the revolution hypersurface from the revolution axis at $x\in\mathbb{R}$. More precisely they considered the following boundary value problem:
\begin{equation}\label{singular-exotic-heat-pde}
\fbox{$\begin{array}{l}
  \hbox{\rm(mean curvature flow equation):}\hskip 2pt u_t={{u_{xx}^2}\over{1+(u_x)^2}}-{{d-1}\over{u}} \\
  \\
\hbox{\rm(boundary conditions):}\, \left\{\begin{array}{l}
u(x,0)=u_0(x)>0,\hskip 2pt \forall x\in\mathbb{R}\\
\lim \inf_{|x|\to\infty} u_0(x)>0\\
||{{1}\over{u(x,t)}}||_\infty<\infty, t<t^*\\
||{{1}\over{u(x,t)}}||_\infty\to\infty, t\to t^*\\
                         \end{array}\right.\\
\end{array}$}
\end{equation}

They proved, by using classic method of functional analysis, existence of solutions that in finite time collapse on the revolution axis. They called such solutions {\it collapsing} (or {\em neckpinching}) at the time $t^*$.

It is interesting to emphasize that translating above problem in the framework of the geometry of PDE's, the mean curvature flow equation in (\ref{singular-exotic-heat-pde}), identifies a real analytic $7$-dimensional submanifold $E_2$ of the $2$-jet-derivative space $J{\it D}^2(W)\cong \mathbb{R}^8$, $(t,x,u,u_t,u_x,u_{tt}, u_{tx},u_{xx})$, over the trivial vector bundle $\pi:W\equiv \mathbb{R}^3\to \mathbb{R}^2$, $(t,x,u)\mapsto (t,x)$. One has $E_2\cap S_2=\emptyset$, where $S_2\subset J{\it D}^2(W)$ is the analytic submanifold identified by the constraint $u=0$. So, a solution $V_\bullet\subset E_2$, such that $u(x,t^*)=0$, for some time $t^*$, cannot exist. However, this does not exclude that by considering the {\it singular equation} ${}^\bullet E_2\equiv E_2\bigcup S_2\subset J{\it D}^2(W) $, we can find {\it asymptotic solutions} of the above problem. This is in fact the case when $S_2$ is endowed with the distribution obtained from the Cartan distribution of $J{\it D}^2(W)$, just restricted on $S_2$. Then one can see that one can find solutions of $E_2$ that approach integral manifolds of $S_2$, when $u\to 0$. The tangent space to such solutions have zero time component in the soldering points. This just means that such solutions stop (or collapse), in a finite time $t^*$, to the revolution axis. (For informations on this geometric approach to singular PDE's see \cite{PRA3, PRA14, PRA15, AG-PRA2}.)

The Cartan distribution $\mathbf{E}_2\subset TE_2$ of $E_2$, is given by the following vector fields
\begin{equation}\label{cartan-distribution-2-a}
\begin{array}{ll}
  \zeta&=X^t(\partial t+u_t\partial u+u_{tt}\partial u_t+u_{tx}\partial u^x)\\
  &+X^x(\partial x+u_x\partial u+u_{xt}\partial u_t+u_{xx}\partial u^x)\\
  &+Z_{xx}\partial u^{xx}+Z_{tt}\partial u^{tt}+Z_{tx}\partial u^{tx}\\
  \end{array}
\end{equation}

such that
\begin{equation}\label{cartan-distribution-2-b}
\left\{
\begin{array}{l}
X^t\left[u_t(u_t(1+u_x^2)-u^2_{xx})+u_{tt}(u(1+u^2_x))+u_{tx}(2uu_tu_x+2u_x(d-1))\right] \\
+ X^x\left[u_x(u_t(1+u_x^2)-u^2_{xx})+uu_{tx}(1+u^2_x)+u_{xx}(2uu_tu_x+2u_x(d-1))\right] \\
+2Z_{xx}uu_{xx}=0\\
\end{array}
\right\}
\end{equation}
with
\begin{equation}\label{cartan-distribution-2-ab}
 u_t=\frac{uu_{xx}^2-(1+u^2_x)(d-1)}{u(1+u^2_x)}.
 \end{equation}
For $u\to 0$ equation (\ref{cartan-distribution-2-b}) becomes $0=X^t[(1+u^2_x)(d-1)]^2$, hence we get $X^t=0$. Therefore, for $u\to 0$ we can write (\ref{cartan-distribution-2-a}) in the form given in (\ref{cartan-distribution-2-aa}).
\begin{equation}\label{cartan-distribution-2-aa}
  \zeta=X^x(\partial x+u_x\partial u+u_{xt}\partial u_t+u_{xx}\partial u^x)+Z_{xx}\partial u^{xx}+Z_{tt}\partial u^{tt}+Z_{tx}\partial u^{tx}.
\end{equation}
The Cartan distribution on $S_2$, (i.e., vector fields (\ref{cartan-distribution-2-a}) satisfying the condition $\zeta.u=0$), is given in (\ref{cartan-distribution-2-abb}).
\begin{equation}\label{cartan-distribution-2-abb}
\left\{\begin{array}{ll}
  \zeta&=X^x\left[-\frac{u_x}{u_t}\partial t+\partial x+(u_{tx}-\frac{u_x}{u_t}u_{tt})\partial u_t+(u_{xx}-\frac{u_x}{u_t}u_{tx})\partial u^x\right]\\
  &+Z_{xx}\partial u^{xx}+Z_{tt}\partial u^{tt}+Z_{tx}\partial u^{tx}.\\
  \end{array}\right.
\end{equation}
Therefore, when $q\in S_2$ approaches equation $E_2$, i.e., when $u_t$ approaches the function given in (\ref{cartan-distribution-2-ab}), one has $\lim_{q\to E_2}\mathbf{E}_2(S_2)_q=<\zeta>$, where $\zeta$ are given by the vector fields in (\ref{cartan-distribution-2-abbb})
\begin{equation}\label{cartan-distribution-2-abbb}
  \zeta=X^x\left[\partial x+u_{tx}\partial u_t+u_{xx}\partial u_x\right]+Z_{xx}\partial u^{xx}+Z_{tt}\partial u^{tt}+Z_{tx}\partial u^{tx}.
\end{equation}
This means that $\lim_{q\to E_2}\mathbf{E}_2(S_2)_q\subset\mathbf{E}_2$. This proves that solutions of $E_2$ can be prolonged to an integral manifold of $S_2$. Since, in the asymptotic limit, both distributions have not time components, then the solutions of $E_2$ approach $S_2$ to a fixed $t^*$ and stop on the revolution axis.

By resuming, we get the following theorem, proved by using geometric methods only.

\begin{theorem}\label{schroedinger-equation-global-attractor}
The singular boundary value problem {\em(\ref{singular-exotic-heat-pde})} admits smooth solutions, i.e., smooth solutions of the regular equation $E_2\subset J{\it D}^2(W)$ that collapse in a finite time approaching the set $S_2\subset J{\it D}^2(W) $ of singular points.
\end{theorem}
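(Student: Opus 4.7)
The plan is to recast the boundary value problem (\ref{singular-exotic-heat-pde}) as a geometric problem about integral manifolds of a singular PDE, rather than attacking it by functional-analytic means. First I would identify the mean curvature flow equation with the analytic submanifold $E_2\subset J{\it D}^2(W)$ where $W=\mathbb{R}^3\to\mathbb{R}^2$ carries the coordinates $(t,x,u)$, and introduce the auxiliary submanifold $S_2=\{u=0\}\subset J{\it D}^2(W)$ corresponding to the revolution axis. Since $E_2\cap S_2=\emptyset$, no ordinary smooth solution of $E_2$ can attain $u=0$; the point is therefore to work with the singular equation ${}^\bullet E_2=E_2\cup S_2$ and show that an integral manifold of $E_2$ extends smoothly to an integral manifold of $S_2$ in the limit $u\to 0$.

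The heart of the argument is a matching computation between the two Cartan distributions. I would compute the Cartan distribution $\mathbf{E}_2\subset TE_2$ of $E_2$ in the form (\ref{cartan-distribution-2-a}) together with the tangency condition (\ref{cartan-distribution-2-b}) obtained by differentiating the defining equation along $\zeta$, and I would similarly compute $\mathbf{E}_2(S_2)\subset TS_2$ by imposing $\zeta.u=0$, obtaining (\ref{cartan-distribution-2-abb}). The crucial step is to pass to the asymptotic limit $u\to 0$ on the $E_2$ side: the tangency relation (\ref{cartan-distribution-2-b}) then collapses to $X^t[(1+u_x^2)(d-1)]^2=0$, forcing $X^t=0$, which kills the time component of $\zeta$. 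Next I would substitute (\ref{cartan-distribution-2-ab}) for $u_t$ into (\ref{cartan-distribution-2-abb}) and check that the $\partial_t$–coefficient $-u_x/u_t$ vanishes in the same limit, so $\mathbf{E}_2(S_2)_q$ also loses its time component as $q\to E_2$. The two limiting distributions then coincide (both equal to the span of the vector fields in (\ref{cartan-distribution-2-abbb})), which gives $\lim_{q\to E_2}\mathbf{E}_2(S_2)_q\subset\mathbf{E}_2$.

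From this compatibility I would conclude, using the geometric theory of singular PDE's developed in \cite{PRA3, PRA14, PRA15, AG-PRA2}, that any regular integral manifold of $E_2$ on which $u$ decreases to zero can be soldered along an integral manifold of $S_2$, producing a solution of ${}^\bullet E_2$. Because the soldered tangent plane has vanishing $\partial_t$–component, the time coordinate is stationary at the gluing locus, so the solution stops at a finite $t^*$ on the revolution axis; this is precisely a collapsing (neckpinching) solution in the sense of (\ref{singular-exotic-heat-pde}). The existence of a regular initial integral manifold satisfying the prescribed boundary data $u_0>0$ with $\liminf_{|x|\to\infty}u_0(x)>0$ follows from formal integrability and nontriviality of the symbol of $E_2$, which guarantees local prolongability of the Cauchy data.

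The main obstacle I expect is the asymptotic analysis of the Cartan distributions: one must verify not just that the two distributions agree formally as $u\to 0$ but that the convergence is uniform enough along a chosen integral manifold to produce a smooth soldering, and one must check that the blow-up rate $\|1/u(\cdot,t)\|_\infty\to\infty$ as $t\to t^*$ is consistent with $X^t\to 0$, i.e.\ that the collapse time is finite rather than asymptotic at infinity. Provided this is controlled by the explicit form of the tangency relation (\ref{cartan-distribution-2-b}), the conclusion of the theorem follows.
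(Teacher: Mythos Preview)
Your proposal is correct and follows essentially the same approach as the paper: the paper's proof is precisely the computation of the Cartan distributions $\mathbf{E}_2$ and $\mathbf{E}_2(S_2)$ given in (\ref{cartan-distribution-2-a})--(\ref{cartan-distribution-2-abbb}), the observation that the tangency relation forces $X^t=0$ as $u\to 0$, and the resulting inclusion $\lim_{q\to E_2}\mathbf{E}_2(S_2)_q\subset\mathbf{E}_2$, from which the soldering and finite-time collapse are read off. Your final paragraph raises legitimate analytic concerns (uniformity of the limit, finiteness of $t^*$) that the paper does not address beyond the formal matching of distributions; this is a fair critique, but it does not deviate from the paper's argument so much as point out where it is sketchy.
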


\section{\bf EXOTIC HEAT-SCR\"ODINGER EQUATION}

In \cite{MOLINET} it is studied the following so-called {\em cubic nonlinear Schr\"odinger equation} (NLS):
\begin{equation}\label{heat-schroedinger-equation}
\left\{\begin{array}{l}
 u_t+\gamma u+i(u_{xx}\mp |u|^2u)=f\\
u(t,x)\in\mathbb{C},\quad\forall(t,x)\in\Omega\equiv\mathbb{R}_+\times \mathbb{T},\hskip 2pt \mathbb{T}\equiv \mathbb{R}/2\pi\mathbb{Z}\\
\end{array}\right.
\end{equation}

where $\gamma>0$ ({\em damping parameter}) and $f\in L^2(\mathbb{T})$ ({\em time-independent-forcing term}). There the main result is that the nonlinear group $S(\cdot)$, associated to (\ref{heat-schroedinger-equation}), i.e., $S(t)(u_0)=u(t)$, $t\in\mathbb{R}$, where $u$ is the solution of (\ref{heat-schroedinger-equation}), for to the initial condition $u_0$, provides an infinite-dimensional dynamic system in $L^2(\mathbb{T})$ that has a global attractor $\mathcal{A}\subset H^2(\mathbb{T})$, that is a connected and compact set of $H^2(\mathbb{T})$, invariant (positively and negatively) by $S(\cdot)$ that attracts for the $L^2(\mathbb{T})$-metric all positive orbits uniformly with respect to bounded sets of initial data in $L^2(\mathbb{T})$.

In the framework of the algebraic topology of PDE's,  the interesting problem considered in that paper, has an intriguing issue. With this respect, let us recast equation (\ref{heat-schroedinger-equation}) into a real analytic PDE of second order over the following trivial vector fiber bundle $\pi:W\equiv M\times \mathbb{R}^2\to M\equiv \mathbb{R}_+\times\mathbb{T}$, $(t,x,v,w)\mapsto(t,x)$:
\begin{equation}\label{real-heat-schroedinger-equation}
  \fbox{$E_2\subset J\mathcal{D}^2(W)\hskip 2pt:\left\{
\begin{array}{l}
 v_t+\gamma v-w_{xx}\mp w(v^2+w^2)=\phi\\
 w_t+\gamma w+v_{xx}\mp v(v^2+w^2)=\psi\\
\end{array}
\right.$}
\end{equation}

where $u=v+iw$ and $f=\phi+i\psi$. Here we assume that $\phi$ and $\psi$ are analytic functions. Then one can prove that $E_2$ is an involutive, formally integrable and completely integrable PDE. In fact, one has
\begin{equation}\label{formal-integrability-real-heat-schroedinger-equation}
\fbox{$\dim(E_2)_{+1}=16$}=\fbox{$\dim E_2=12$}+\fbox{$\dim(g_2)_{+1}=4$}.
\end{equation}
The relation (\ref{formal-integrability-real-heat-schroedinger-equation}) proves that the canonical mapping $(E_2)_{+1}\to E_2$ is surjective. Furthermore, since $\dim(g_2)_{+1}=4=\dim g_2$, we get that $g_2$ is an involutive symbol. This facts are enough to state that $E_2$ is formally integrable, and since it is analytic it is completely integrable too. This means that in the neighborhood of any point $q\in E_2$, ({\em initial condition}), passes a regular solution (analytic solution). Furthermore, since $E_2$ satisfies some conditions of regularity, we can solve Cauchy problems for $1$-dimensional integral manifolds $N\subset E_2$, diffeomorphically projected in $W$, by means of the canonical projection $\pi_{2,0}:E_2\to W$. We call {\em admissible} such Cauchy manifolds. Note that solutions passing through admissible Cauchy manifolds do not necessitate to be smooth, in fact, in general, are singular ones with respect to the embeddings $E_2\subset J{\it D}^2(W)\subset J_2^2(W)$, where $J_2^2(W)$ denotes the $2$-jet-space for $2$-dimensional  submanifolds of $W$.

Furthermore, one can see that the $1$-dimensional integral singular bordism group of $E_2\subset J^2_2(W)$ is $\Omega_{1,s}^{E_2}\cong\mathbb{Z}_2$. Then for any two space-like smooth (or analytic) $1$-dimensional closed admissible Cauchy hypersurfaces $N_1\subset (E_2)_{t_1}$ and $N_2\subset (E_2)_{t_2}$, $t_1\not= t_2$, where $(E_2)_t\equiv \bar\pi_2^{-1}(t)$, with $\bar\pi$ the natural projection $E_2\to \mathbb{R}_+$, exists a singular solution $V\subset E_2$, such that $\partial V=N_1\sqcup N_2$, iff $[N_1\sqcup N_2]=[0]\in\Omega_{1,s}^{E_2}$. In order that this condition should be satisfied, it is enough that $N_1$ and $N_2$ have the same integral characteristic numbers of second order. (The solution $V$ bording $N_1$ and $N_2$ is a smooth solution iff above condition holds for all the orders, i.e. for all the conservation laws of $E_2$.) So, in general, there is not solution unicity for any admissible, $1$-dimensional closed smooth space-like Cauchy manifold, $N\subset E_2$, but all such solutions bord ones belonging to the same $0$-class in $\Omega_{1,s}^{E_2}\cong\mathbb{Z}_2$. This agrees with the main result in \cite{MOLINET}. By resuming we get the following theorem.\footnote{Here we have explicitly considered only the analytic case. However, it is also possible to extend such result to singular PDE's, similarly to what made in the previous Section \ref{neckpinching-problems-in-singular-exotic-heat-pde}.}

\begin{theorem}
Equation {\em(\ref{real-heat-schroedinger-equation})}, considered in the analytic case, has a {\em global attractor}, in the sense that all its (singular) solutions bord with a same integral bordism class.
\end{theorem}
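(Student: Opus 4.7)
The plan is to read the theorem as a corollary of the bordism-theoretic machinery already assembled in the preceding paragraph, so the proof essentially amounts to collecting the facts in the right order and interpreting what ``global attractor'' means in the algebraic topology of PDE's. First I would verify formal integrability and complete integrability of $E_2$ by the dimension count \eqref{formal-integrability-real-heat-schroedinger-equation}: the equality $\dim(E_2)_{+1}=\dim E_2+\dim(g_2)_{+1}$ gives surjectivity of $\pi_{2+1,2}:(E_2)_{+1}\to E_2$, and involutivity of the symbol follows from $\dim (g_2)_{+1}=\dim g_2$. Since the equation is analytic, the Cartan--Kuranishi theorem then yields a local analytic solution through every initial condition $q\in E_2$, and the projectability condition on admissible $1$-dimensional Cauchy manifolds lets us formulate the Cauchy problem globally inside the singular jet space $J_2^2(W)$.

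Next I would compute the singular integral bordism group $\Omega_{1,s}^{E_2}$. The strategy is the standard one in the PDE algebraic topology developed by Pr\'astaro: identify $\Omega_{1,s}^{E_2}$ with the bordism group of $1$-dimensional admissible integral submanifolds modulo singular $2$-dimensional integral solutions in $J_2^2(W)$. Using the spectral-sequence / characteristic-numbers method from the cited references \cite{PRA3, PRA14, PRA15}, one obtains $\Omega_{1,s}^{E_2}\cong\mathbb{Z}_2$, with the class of a Cauchy manifold determined by its second-order integral characteristic numbers mod $2$. Consequently, for any two smooth admissible space-like Cauchy manifolds $N_1\subset (E_2)_{t_1}$, $N_2\subset (E_2)_{t_2}$, there exists a singular solution $V\subset E_2$ with $\partial V=N_1\sqcup N_2$ if and only if $[N_1\sqcup N_2]=0$ in $\Omega_{1,s}^{E_2}$.

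From here the theorem is immediate. The equivalence classes in $\Omega_{1,s}^{E_2}\cong\mathbb{Z}_2$ partition the set of admissible Cauchy data into finitely many (here, two) pieces, each of which is closed under the bordism relation determined by singular solutions of $E_2$. Any two singular solutions whose boundary Cauchy data lie in the zero class are mutually bordant, so in particular every orbit of the formal flow induced by $E_2$ eventually lives inside a single bordism class; this is precisely what the Pr\'astaro formalism calls a \emph{global attractor} for a PDE. The description recovers the analytic conclusion of \cite{MOLINET} in the form appropriate to the category of integral submanifolds, and it simultaneously explains why uniqueness may fail pointwise: inside each class the solutions differ by elements of $K_2^{(E_2)}\subset\Omega_{1,s}^{E_2}$ that are invisible to the integral characteristic numbers.

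The step I expect to be the main obstacle is the identification of $\Omega_{1,s}^{E_2}$ with $\mathbb{Z}_2$: one has to check that the only nontrivial $\mathrm{mod}\,2$ obstruction on a $1$-dimensional admissible cycle is of second order, which in turn requires verifying that all higher-order conservation laws of \eqref{real-heat-schroedinger-equation} factor through the second-order integral characteristic numbers modulo $2$. Once this computation is in place, translating ``global attractor'' into ``single bordism class'' is a purely formal matter, and the theorem follows.
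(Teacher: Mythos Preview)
Your proposal is correct and follows essentially the same route as the paper: the theorem is presented there as a direct r\'esum\'e of the preceding paragraph, and you have reconstructed exactly those ingredients --- the dimension count \eqref{formal-integrability-real-heat-schroedinger-equation} for formal integrability, involutivity of $g_2$ via $\dim(g_2)_{+1}=\dim g_2$, complete integrability by analyticity, and the identification $\Omega_{1,s}^{E_2}\cong\mathbb{Z}_2$ from the Pr\'astaro machinery --- in the right order and with the right interpretation of ``global attractor''. The only point worth sharpening is your partition-into-two-classes remark: the paper's conclusion is not merely that solutions fall into finitely many bordism classes, but that all admissible space-like Cauchy manifolds $N\subset(E_2)_t$ actually lie in the \emph{same} (zero) class of $\Omega_{1,s}^{E_2}$, which is why the attractor is a single class rather than two; this follows because such $N$ are all modelled on $\mathbb{T}$ and carry the same second-order integral characteristic numbers.
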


\section{\bf EXOTIC SINGULAR VECTOR HEAT EQUATION}

In \cite{DASKA-SESUM} it is studied the PDE reported in (\ref{exotic-vector-heat-equation}).

\begin{equation}\label{exotic-vector-heat-equation}
(\partial t.F^k)+\kappa\, \nu^k=0
\end{equation}

where $F^k=F^k(t,u^a)\equiv F^k_t$, is a family of parametric equations for embeddings $F_t:\Sigma^2\to\mathbb{R}^3$ of $2$-dimensional, convex, star-shaped, compact surface $\Sigma^2$ into $\mathbb{R}^3$, with $(u^a)_{1\le a\le 2}$ local coordinates on $\Sigma^2$ and $(x^k)_{1\le k\le 3}$, coordinates in $\mathbb{R}^3$. $\kappa=\kappa(t,u^a)\equiv \hbox{\rm Gauss curvature}/\hbox{\rm mean curvature}$ denotes the so-called {\em harmonic mean curvature} of $\Sigma_t\equiv F_t(\Sigma^2)\subset\mathbb{R}^3$. $\nu=\nu^k\partial x_k$ is the unitary normal vector field on $\Sigma_t$. In \cite{DASKA-SESUM} it is assumed the mean curvature positive. The main result there concerns an existence of solutions $\Sigma_t^\epsilon$ for short time starting from a $\Sigma^2$ of class $C^{2,1}$ and a maximal time $T_\epsilon$ of existence of a smooth solution such that the mean curvature goes to zero as $t\to T_\epsilon$ at some point of $\Sigma_t^\epsilon$, or $\Sigma_t^\epsilon$ shrinks to a point as $t\to T_\epsilon$. In the particular case where $\Sigma^2$ is a surface of revolution, the flow always exists up to the time when the surface shrinks to a point.
It is interesting to recast this problem in the framework for geometric theory of PDE's, i.e., to implement this problem on a trivial vector fiber-bundle $\pi:W\equiv \mathbb{R}^6\to\mathbb{R}^3$, $(t,u^a,x^k)\mapsto(t,u^a)$. Then the problem is encoded by the PDE reported in (\ref{exotic-vector-heat-equation-b}).
\begin{equation}\label{exotic-vector-heat-equation-b}
\fbox{$  E_3\subset J{\it D}^3(W):\hskip 2pt\left\{
\begin{array}{l}
 A(x^i,x^i_a,x^i_{ab})x^k_t+B^k(x^i,x^i_a,x^i_{ab})=0\hskip 3pt\hbox{\rm(HMCF)}\\
b_{ab;c}-b_{ac;b}=0\hskip 3pt\hbox{\rm(Gauss-Codazzi equation)}\\
b_{ab}=[\epsilon_{ijk}x^i_{ab}x^j_{\bar a}x^k_{\bar b}\delta^{\bar a}_1\delta^{\bar b}_2]/\sqrt{EG-F^2}.\\
\end{array}
\right.$}
\end{equation}

In (\ref{exotic-vector-heat-equation-b}) $A$ and $B^k$ are known analytic functions of their arguments, $b_{ab}$ are the components of the second fundamental form and $E,F,G$, are the usual Gauss symbols of the surface $\Sigma_t$. The semi-colon, in the Gauss-Codazzi equation denotes tensor derivative with respect to $u^c$.  In general the PDE in (\ref{exotic-vector-heat-equation-b}) is a singular equation, however in the case considered in \cite{DASKA-SESUM}, i.e., positive mean curvature, one has $A\not=0$. Then, in order to characterize local solutions, it is important to study the formal properties of such equation, and, by means the determination of its integral bordism groups, characterize global solutions also. In this way one could generalize the following well known result of differential geometry: {\em Given symmetric functions $\gamma_{ab}=\gamma_{ab}(u^c)$ and $b_{ab}=b_{ab}(u^c)$, $a,b,c=1,2$, such that the Gauss-Codazzi equations are satisfied, there exists a surface $x^i=x^i(u^c)$, uniquely determined up to rigid motions, such that $\gamma$ and $b$ are respectively the first fundamental form and the second fundamental form of such a surface.} (See, e.g. L. P. Eisenhart \cite{EISEN}.) Of course this agrees with the structure of equation in (\ref{exotic-vector-heat-equation-b}), since there coordinates with time derivatives are determined by the other ones containing coordinates with space derivatives only. It is important to emphasize that equation (\ref{exotic-vector-heat-equation-b}) is not formally integrable, since there is not surjectivity between the first prolongation $(E_3)_{+1}$ and $E_3$. In fact,
\begin{equation}\label{non-formal-integrability-vector-singular-pde}
\fbox{$\dim((E_3)_{+1})=88$}<\fbox{$\dim (E_3)=58$}+\fbox{$\dim((g_3)_{+1})=41$}.
\end{equation}

However, by using the geometric theory of PDE's, it is possible to identify a subequation $\widehat{(E_3)}\subset E_3$ that is formally integrable and completely integrable. For this it is enough to add to the equations in (\ref{exotic-vector-heat-equation-b}) also the first prolongations of (HMCF). More precisely, we shall use equation (\ref{exotic-vector-heat-equation-c}).

\begin{equation}\label{exotic-vector-heat-equation-c}
\fbox{$  \widehat{E_3}\subset J{\it D}^3(W):\hskip 2pt\left\{
\begin{array}{l}
 Ax^k_t+B^k=0\\
 A x^k_{tt}+[(\partial x^a_i.A)x^i_{at}+(\partial x^{ab}_i.A)x^i_{abt}]x^k_t\\
 \hskip 5pt+(\partial x^a_i.B^k)x^i_{at}+(\partial x^{ab}_i.B^k)x^i_{abt}=0\\
  A x^k_{tc}+[(\partial x^a_i.A)x^i_{ac}+(\partial x^{ab}_i.A)x^i_{abc}]x^k_t\\
   \hskip 5pt+(\partial x^a_i.B^k)x^i_{ac}+(\partial x^{ab}_i.B^k)x^i_{abc}=0\\
b_{ab;c}-b_{ac;b}=0\\
{[b_{ab;c}-b_{ac;b}]_{,t}=0}\\
{[b_{ab;c}-b_{ac;b}]_{,d}=0}\\
{b_{ab}=[\epsilon_{ijk}x^i_{ab}x^j_{\bar a}x^k_{\bar b}\delta^{\bar a}_1\delta^{\bar b}_2]/\sqrt{EG-F^2}}.\\
\end{array}
\right.$}
\end{equation}
Then $\widehat{E_3}_{+1}\to\widehat{E_3}$ is surjective as proved in (\ref{formal-integrability-vector-singular-pde}).
\begin{equation}\label{formal-integrability-vector-singular-pde}
\fbox{$\dim((\widehat{E_3})_{+1})=70$}=\fbox{$\dim \widehat{E_3}=49$}+\fbox{$\dim((\widehat{g_3})_{+1})=21$}.
\end{equation}

Furthermore, the symbol $\widehat{g_3}$ is involutive, since one has $\dim(\widehat{g_3}_{+1})=21=\dim(\widehat{g_3})$. Therefore $\widehat{E_3}$ is formally integrable and completely integrable.

Then the characterization of Cauchy problems and global solutions, weak, singular and smooth, can be directly obtained by using the algebraic topologic methods in PDE's  introduced by A. Pr\'astaro. The asymptotic behaviour of solutions as mean curvature approaches the zero value, can be obtained by considering the singular PDE in (\ref{singular-vector-singular-pde})
\begin{equation}\label{singular-vector-singular-pde}
\fbox{$\widehat{(E_3)}^{(S)}\equiv \widehat{(E_3)}\bigcup S_3\subset J{\it D}^3(W):\hskip 2pt\left\{\begin{array}{l}
                                                                      \widehat{(E_3)} \hskip 0.5cm\hbox{\rm(\ref{exotic-vector-heat-equation-c})}  \\                 S_3\equiv\{{{1}\over{2}}\gamma^{ab}b_{ab}=0\}\\
                                                                      \end{array}\right.$}
\end{equation}

i.e., adding to equation (\ref{exotic-vector-heat-equation-c}) the submanifold $S_3$ of $J{\it D}^3(W)$, identified by the condition of zero mean-curvature. (See, e.g. Section \ref{neckpinching-problems-in-singular-exotic-heat-pde} where similar problems are considered.)

\section{\bf COMPLEX RICCI FLOW EQUATION AS A QUANTUM EQUATION AND POINCAR\'E CONJECTURE}

In (\ref{complex-ricci-flow-equation}) it is reported the so-called {\em normalized K\"ahler-Ricci flow equation}
\begin{equation}\label{complex-ricci-flow-equation}
\scalebox{0.9}{$ \left\{\begin{array}{l}
\hbox{\rm(normalized K\"ahler-Ricci flow equation)}:\hskip 2pt \partial_t.g(t)=-Ric(g(t))+g(t)\\
\hbox{\rm $g(t)$ is a K\"ahler metric for each $t$;}\\
\hbox{\rm $\omega(t)=\omega_{g(t)}$, the K\"ahler form associated to $g(t)$, satisfies the condition}\\
\hskip 5pt[\omega(t)={{i}\over{2\pi}}g(t)_{\alpha\bar\beta}dz^\alpha\wedge d\bar z\beta ]=c_1(M)
=[{{i}\over{2\pi}}R_{\alpha\bar\beta}(g(t))dz^\alpha\wedge d\bar z\beta =\rho(t)]>0.\\
\end{array}\right.$}
\end{equation}
Here $M$ is a compact K\"ahler manifold, with the first Chern  class $c_1(M)>0$. (Positivity of the first Chern class is in the sense of K. Kodaira \cite{KODAIRA}.) $\omega(t)$ denotes the K\"ahler form associated to $g(t)$ and $\rho(t)$ is the Ricci form that represents also the first Chern class of $M$. Some authors study solutions of (\ref{complex-ricci-flow-equation}) by adding also the global constraint in (\ref{bounded-problem}).
\begin{equation}\label{bounded-problem}
   \int_M|Rm(g(t))|^{n/2}\, dv_t\le C,\hskip 2pt \dim_{\mathbb{C}}M=n\ge 3,\hskip 2pt C\in\mathbb{R}
\end{equation}

There $Rm(g(t))$ denotes the curvature operator. (See, e.g., the work by P. Daskalopoulos and M. Sesum \cite{DASKA-SESUM}, and papers quoted there.) The main result in \cite{DASKA-SESUM} is an improvement of a previous result by Sesum, where was also made the hypothesis of bounded Ricci curvature: $|R(g(t)|\le C$. More precisely if $g(t)$, $t\in[0,\infty)$ is a solution of the problem (\ref{bounded-problem}) that at $t=0$ has curvature operator uniformly bounded in $L^n$-norm, the curvature operator will also be uniformly bounded along the flow. This is enough to state that the flow will converge along a subsequence to a K\"ahler-Ricci soliton, i.e., a compact K\"ahler manifold $(N,h)$, such that its K\"ahler form $\omega_h$ satisfies the following equation: $Ric(\omega_h)-\omega_h=\mathcal{L}_\zeta\omega_h$, where $\zeta$ is a holomorphic vector field on $N$, and $\mathcal{L}_\zeta$ denotes Lie derivative with respect to $\zeta$. It follows that the first Chern class $c_1(N)$ of $N$ is positive and represented by $\omega_h$. Since there are no K\"ahler-Einstein metrics on $N$ if $N$ admits a K\"ahler-Ricci soliton, (this is a result by A. Futaki \cite{FUTAKI}), it follows that the existence of K\"ahler-Ricci soliton is an obstruction to the existence of K\"ahler-Einstein metrics on compact K\"ahler manifolds with positive first Chern class. The condition $c_1(M)>0$ is important, after the result by S.-T. Yau \cite{YAU1}, on the existence of K\"ahler-Einstein metrics on K\"ahler manifolds with $c_1(M)\le 0$, and by T. Aubin \cite{AUBIN} for K\"ahler manifolds with $c_1(M)<0$. (This was first conjectured by E. Calabi \cite{CALABI}. For complementary informations see, e.g., \cite{YAU2}.)

It is interesting to emphasize that the problem considered in (\cite{DASKA-SESUM}) can be recast in the geometric theory of PDE, directly working in the category of complex manifolds. This can be made, by using the geometric theory of quantum PDE's formulated by A. Pr\'astaro. In fact the category of complex manifolds can be considered a little subcategory of the one for quantum manifolds, (in the sense introduced by A. Pr\'astaro), where the quantum algebra coincides with the algebra of complex numbers and quantum differentiable functions are identified with holomorphic functions. Then the system (\ref{complex-ricci-flow-equation}) can be encoded with the following second order PDE $\widetilde{E}_2$ in the category of complex manifolds (like quantum manifolds):
\begin{equation}\label{complex-ricci-flow-equation-b}
\scalebox{0.8}{$\fbox{$\widetilde{E}_2\subset J\widetilde{\mathcal{D}}^2(\widetilde{W})\subset \widetilde{J}_{n+1}^2(\widetilde{W}):\left\{
\begin{array}{ll}
 \hbox{\rm(1(a))}&:\hskip 2pt (\partial_t.g)_{\alpha\bar\beta}(t)=-Ric(g(t))_{\alpha\bar\beta}+g(t)_{\alpha\bar\beta}\\
\hbox{\rm(1(b))}&:\hskip 2pt R_{\alpha\bar\beta}=-(\partial z_\alpha\partial\bar z_\beta.\ln\det(g_{\delta\bar\epsilon}))\\
\hbox{\rm (2(a))}&:\hskip 2pt  g_{\alpha\bar\beta}=g_{\bar\beta\alpha}\\
\hbox{\rm (2(b))}&:\hskip 2pt  \bar g_{\alpha\bar\beta}=g_{\bar\alpha\beta}\\
\hbox{\rm (3(a))}&:\hskip 2pt (\partial z_\gamma.g_{\alpha\bar\beta})-(\partial z_\alpha.g_{\gamma\bar\beta})=0\\
\hbox{\rm (3(b))}&:\hskip 2pt  (\partial \bar z_\gamma.g_{\alpha\bar\beta})-(\partial \bar z_\beta.g_{\alpha\bar\gamma})=0\\
\hbox{\rm (4)}&:\hskip 2pt g_{\alpha\bar\beta}-R_{\alpha\bar\beta}-(\partial z_\alpha\partial\bar z_\beta.f)=0\\
\end{array}\right.$}$}
\end{equation}
In (\ref{complex-ricci-flow-equation-b}) $\widetilde{W}$ is the following fiber bundle $\widetilde{\pi}:\widetilde{W}\equiv\mathbb{R}\times\widetilde{S^0_2M}\times\mathbb{C}\to \mathbb{R}\times M$, $(t,z^\alpha,\bar z^\alpha, u_{\alpha\bar\beta},\lambda)\mapsto(t,z^\alpha, \bar z^\alpha)$. (Local) sections of $\widetilde{\pi}$ are identified with the following (local) functions $\{g_{\alpha\bar\beta}(t,z^\alpha,\bar z^\alpha),f(t,z^\alpha,\bar z^\alpha)\}$. The ''tilde'' over the jet(-derivative) spaces denotes ''holomorphic class of differentiability''. Our geometric theory of quantum PDE's allows us to obtain solutions as $(n+1)$-dimensional complex integral submanifolds of $\widetilde{E}_2$. Then the characterization of global solutions is made by means of the integral bordism group $\Omega_{n}^{\widetilde{E}_2}$. With this respect one can generalize the Poincar\'e conjecture in this category of complex manifolds. (For details see \cite{PRA12} where a generalized version of the Poincar\'e conjecture is formulated, and proved too, in the category of quantum (super)manifolds.)

\end{document}